\documentclass[12pt, a4paper, reqno]{amsart}

\usepackage[english]{babel}

\usepackage[T1]{fontenc}
\usepackage{lmodern}

\usepackage{amsmath}
\usepackage{amssymb}
\usepackage{amsthm}

\usepackage{url}

\usepackage{hyperref} 

\usepackage{enumitem}
\usepackage[normalem]{ulem}

\renewcommand{\epsilon}{\varepsilon}
\renewcommand{\theta}[0]{\vartheta}
\renewcommand{\phi}[0]{\varphi}

\newcommand{\Z}{\mathbb{Z}}

\newcommand{\ppar}{\ \par}

\newcommand{\Span}[1]{\left\langle\, #1 \,\right\rangle}

\newcommand{\Set}[1]{\left\{ #1 \right\}}


\newcommand{\extra}[0]{\widehat}

\DeclareMathOperator{\Aut}{Aut}

\DeclareMathOperator{\Frat}{Frat}

\newtheorem{dummy}{Dummy}
\numberwithin{dummy}{section}
\numberwithin{figure}{section}

\newtheorem{theorem}[dummy]{Theorem}

\newtheorem{lemma}[dummy]{Lemma}

\newtheorem{prop}[dummy]{Proposition}

\theoremstyle{definition}
\newtheorem{definition}[dummy]{Definition}

\newtheorem{example}[dummy]{Example}

\theoremstyle{remark}

\newtheorem{remark}[dummy]{Remark}

\newtheorem{question}[dummy]{Question}

\makeatletter
\def\imod#1{\allowbreak\mkern10mu({\operator@font mod}\,\,#1)}
\makeatother

\numberwithin{equation}{section}

\usepackage{amsthm, mathtools, amssymb, amsfonts, stmaryrd, latexsym,
mathrsfs, todonotes, eufrak, color} 

\begin{document}

\date{5 June 2024, 10:28 CEST --- Version 4.03%
}

\title[Elements of prime order]
      {Elements of prime order
      \\
      in the upper central series
      \\
      of a group
      of prime-power order}
      
\author{A. Caranti}

\address[A.~Caranti]%
 {Dipartimento di Matematica\\
  Universit\`a degli Studi di Trento\\
  via Sommarive 14\\
  I-38123 Trento\\
  Italy} 

\email{andrea.caranti@unitn.it} 

\urladdr{https://caranti.maths.unitn.it/}

\author{C. M. Scoppola}

\address[C.~M.~Scoppola]
{Dipartimento di Ingegneria e Scienze dell'In\-for\-ma\-zio\-ne e Matematica\\
  Università degli Studi dell'Aquila\\
  Via Vetoio (Coppito 1)\\
 I-67100 Coppito\\
Italy}

\email{carlo.scoppola@univaq.it}

\author{Gunnar Traustason}

\address[Gunnar Traustason]
{Department of Mathematical Sciences\\
University of Bath\\
Claverton Down\\
Bath BA2 7AY\\
UK}

\email{gt223@bath.ac.uk}

\urladdr{https://people.bath.ac.uk/gt223/}

\subjclass[2010]{20D15 20D30 20F12 20F14 20F18}

\keywords{finite $p$-groups, upper central series, elements of prime order}

\begin{abstract}
  We investigate the occurrence of elements of order $p$ in the upper
  central series of a finite $p$-group.
\end{abstract}

\thanks{The first two authors are members of INdAM---GNSAGA. The first author
gratefully acknowledges support from the Department of Mathematics of
the University of Trento.}

\maketitle

\thispagestyle{empty}

\section{Introduction}

In his Mathematics Stack Exchange post~\cite{Sikora-MSE}, Igor Sikora
asked the following question, which originates with Cihan Okay.
\begin{question}
  \label{question}
  Let $p > 2$ be a prime. Is there a finite $p$-group $G$ with the
  following properties?
  \begin{enumerate}
  \item\label{item:1}
    $G$ is generated by elements of order $p$,
  \item
    $G$ is non-abelian, and
  \item\label{item:3}
    for every pair of non-commuting elements $x, y \in G$ of order
    $p$, their product $x y$ has order 
    greater than $p$.
  \end{enumerate}
\end{question}
A dihedral group of order $2^{n} \ge 8$ provides an example of a group
which satisfies~\eqref{item:1}--\eqref{item:3} for $p  = 2$, hence the
requirement for the prime $p$ to be odd.

In an answer~\cite{aec-ans} to the above post, the first author showed
that there is no group satisfying the properties of 
Question~\ref{question}, building on the following
\begin{lemma}
  \label{lemma:2}
  Let $p$ be an odd prime. Let $G$ be a finite, non-abelian $p$-group,
  which is generated by elements of order $p$.

  Then there is an element of order $p$ in $Z_{2}(G) \setminus Z(G)$.
\end{lemma}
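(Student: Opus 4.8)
The plan is to analyse the $p$-power map on $Z_2 = Z_2(G)$, which turns out to be very constrained once a few structural facts are recorded. First I would note that, since $G$ is generated by elements of order $p$, its abelianisation $G/G'$ is generated by elements of order dividing $p$ and is therefore elementary abelian; in particular $g^p \in G'$ for every $g \in G$. Next, from $[Z_2, G] \le Z := Z(G)$ one gets $[Z_2, Z_2, Z_2] \le [Z, G] = 1$, so $Z_2$ has nilpotency class at most $2$ and hence is a regular $p$-group (here $p$ odd enters, the class being at most $2 \le p-1$). For $x \in Z_2$ the map $\phi_x \colon g \mapsto [x,g]$ is a homomorphism $G \to Z$, and as $G/G'$ is elementary abelian its image has exponent $p$, so $[x,g]^p = 1$ for all $g$. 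Feeding this into the class-$2$ expansion $(xy)^p = x^p y^p [y,x]^{\binom{p}{2}}$ and using $p \mid \binom{p}{2}$ (again $p$ odd), I conclude that $P \colon Z_2 \to Z$, $x \mapsto x^p$, is a homomorphism with image in $G' \cap Z$, and that $Z_2/Z$ has exponent $p$.

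Second, I would reduce the statement to a single clean assertion about $P$. If some element of order $p$ already lies in $Z_2 \setminus Z$ there is nothing to prove, so assume not. I claim it then suffices to find $x \in Z_2 \setminus Z$ with $x^p \in Z^p$: choosing $z \in Z$ with $z^p = x^{-p}$ gives $(xz)^p = x^p z^p = 1$, because $P$ is a homomorphism and $z$ is central, while $xz \notin Z$ since $z \in Z$; thus $xz$ is the desired element. Equivalently, writing $\bar P$ for the induced $\F_p$-linear map $Z_2/Z \to Z/Z^p$, I must show that $\bar P$ has nontrivial kernel.

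Third --- and this is where the real work lies --- I would prove that $\bar P$ is not injective. A bare dimension count does not suffice, since a priori $\dim_{\F_p}(Z_2/Z)$ can be as small as the rank of $Z$, so the hypothesis that $G$ is generated by elements of order $p$ must be used in an essential way beyond the reductions above. The approach I favour is induction on $\Order{G}$: pick a central subgroup $N \le Z^p$ of order $p$ and pass to $G/N$, which is again non-abelian and generated by elements of order $p$. The main obstacle is that the centre can grow in the quotient --- an element of $Z_2(G/N) \setminus Z(G/N)$ need only lift into $Z_3(G)$ --- so the induction does not close naively. I would control this by choosing $N$ to avoid the finitely many order-$p$ subgroups of the form $[g,G]$ with $g \in Z_2 \setminus Z$ (collapsing such a line is exactly what enlarges the centre), thereby arranging $Z(G/N) = Z/N$ and $Z_2(G/N) = Z_2/N$, and then lifting the element produced by induction. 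Showing that such an $N$ can always be found --- or else treating directly the exceptional configurations where $Z^p$ is too small (for instance $Z$ elementary abelian), presumably via a Hall--Petrescu analysis of $x^p$ after writing $x \in Z_2$ as a product of elements of order $p$ --- is the crux of the argument and the step I expect to be hardest.
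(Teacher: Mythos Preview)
Your reductions in the first two paragraphs are correct and pleasant: $P\colon Z_{2}\to Z$, $x\mapsto x^{p}$, is indeed a homomorphism landing in $G'\cap Z$, and the lemma does reduce to showing that the induced map $\bar P\colon Z_{2}/Z\to Z/Z^{p}$ has nontrivial kernel. The difficulty is entirely in your third paragraph, and it is a genuine gap rather than merely a hard step. Your induction needs an $N\le Z^{p}$ of order $p$ with $N\ne [g,G]$ for every $g\in Z_{2}\setminus Z$; but when $Z$ is elementary abelian one has $Z^{p}=1$ and no such $N$ exists at all, and even when $Z^{p}\ne 1$ the finitely many ``bad'' subgroups $[g,G]$ can exhaust the order-$p$ subgroups of $Z^{p}$ (they all lie in $[Z_{2},G]\le\Omega_{1}(Z)$, which may well contain $\Omega_{1}(Z^{p})$). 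The fallback you sketch, a Hall--Petrescu computation of $x^{p}$ after writing $x\in Z_{2}$ as a product of generators of order $p$, would require controlling iterated commutators of arbitrary length in $G$, not merely in $Z_{2}$, and there is no reason those should vanish or land in $Z^{p}$. So as it stands the argument does not close.

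The paper avoids this entirely by working from the top down rather than analysing $Z_{2}$ directly. One only uses the hypothesis ``generated by elements of order $p$'' to obtain \emph{some} element $x$ of order $p$ outside $Z(G)$; say $x\in Z_{m}(G)\setminus Z_{m-1}(G)$ with $m\ge 2$ minimal. If $m>2$, choose $y$ as deep as possible in the lower central series subject to $[x,y]\notin Z(G)$; then $[x,y]\in\gamma_{s+1}(G)$ forces $[x,[x,y]]\in Z(G)$ by maximality, so $H=\langle x,[x,y]\rangle$ has class at most $p-1$ and is regular. Since $x$ and $x^{y}=x[x,y]$ both have order $p$, regularity gives $[x,y]^{p}=(x^{-1}x^{y})^{p}=1$, producing an element of order $p$ in a strictly lower layer and contradicting the minimality of $m$. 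This descent uses neither the structure of $Z$ nor any quotienting, and in fact proves the stronger Theorem~1.4(1): the hypothesis that $G$ be generated by elements of order $p$ is only needed to get started.
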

A  negative  answer to  Question~\ref{question}  is  then obtained  as
follows. Let $t \in Z_{2}(G) \setminus  Z(G)$ have order $p$. Since $t
\notin Z(G)$, and $G$ is generated by elements of order $p$, there  is
an element $x  \in G$ of order  $p$ which does 
not commute with $t$. Since $t \in Z_{2}(G)$, we have that $1 \ne [t, x] \in
Z(G)$, so that the group $\Span{x, t}$ has class two, and thus
\begin{equation*}
  (x t)^{p}
  =
  x^{p} t^{p} [t, x]^{\binom{p}{2}}
  =
  [t^{\binom{p}{2}}, x]
  =
  1,
\end{equation*}
as $p > 2$.
Igor   Sikora   kindly   asked   to  include   the   answer   in   the
paper~\cite{Sikora-paper}; it appears there as Lemma 4.8 and its proof.

The goal of this short note is  to record an extension
(Theorem~\ref{thm:p-1} below)
of Lemma~\ref{lemma:2}.
\begin{definition}
  Let $G$ be a finite $p$-group of nilpotence class $c$. 
  \begin{enumerate}
  \item
    For $1 \le i \le c$, the \emph{$i$-th layer}
    of the upper central series is the set
    \begin{equation*}
      Z_{i}(G) \setminus Z_{i-1}(G).
    \end{equation*}
  \item The \emph{spectrum} of $G$ is the set
    \begin{equation*}
      \Set{
        1 \le i \le c
        :
        \text{there is an element of order $p$
          in $Z_{i}(G) \setminus Z_{i-1}(G)$}
      }
    \end{equation*}
  \end{enumerate}
\end{definition}

\begin{theorem}
  \label{thm:p-1}
  Let $p$ be a prime.
  \begin{enumerate}
  \item\label{item:p-1}
    Let $G$ be a finite $p$-group.

    Assume there is an element of order $p$ in the $k$-th layer
    \begin{equation*}
      Z_{k}(G) \setminus Z_{k-1}(G)
    \end{equation*}
    of the upper central series, for some $k \ge 2$.
    
    Then the spectrum of $G$ contains the set
    \begin{equation*}
      \Set{1, 2, \dots, \min\Set{k, p - 1}}.
    \end{equation*}
  \item\label{item:viceversa}
    \begin{enumerate}
    \item
      \label{item:viceversa-a}
      Given any $1 \le k \le p-1$ and $c \ge k$, there is a  finite
      $p$-group  $G$ of class  $c$
      whose spectrum is
      \begin{equation*}
        \Set{ 1, \dots, k }.
      \end{equation*}
    \item
      \label{item:viceversa-b}
      Given any $n \ge 1$, and any sequence
      \begin{equation*}
        p \le c_{1} < c_{2} < \dots < c_{n} \le c
      \end{equation*}
      of integers,  there is a  finite $p$-group  $G$ of class  $c$ whose
      spectrum is
      \begin{equation*}
        \Set{ 1, \dots, p-1, c_{1} , c_{2} , \dots , c_{n} }.
      \end{equation*}
    \end{enumerate} 
  \end{enumerate}
\end{theorem}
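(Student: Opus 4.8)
The plan is to prove part~\eqref{item:p-1} by working inside the regular part of the group and reformulating the conclusion in terms of the subgroups $\Omega_{1}(Z_{i}(G))$. For $i\le p-1$ the term $Z_{i}(G)$ has nilpotence class at most $i<p$, hence is a regular $p$-group, so $\Omega_{1}(Z_{i}(G))=\Set{z\in Z_{i}(G):z^{p}=1}$ is a (characteristic, exponent-$p$) subgroup, and $\Omega_{1}(Z_{i-1}(G))=\Omega_{1}(Z_{i}(G))\cap Z_{i-1}(G)$. Consequently the $i$-th layer contains an element of order $p$ if and only if $\Omega_{1}(Z_{i}(G))\not\subseteq Z_{i-1}(G)$. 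The engine of the argument is a downward step: if $2\le i\le p-1$ and the $i$-th layer contains an element $s$ of order $p$, then, since $s\notin Z_{i-1}(G)$, there is $x$ with $[s,x]\in Z_{i-1}(G)\setminus Z_{i-2}(G)$; because $\Span{s}^{G}\subseteq Z_{i}(G)$ is regular and $s$ has order $p$, the element $[s,x]=s^{-1}s^{x}$ lies in $\Omega_{1}(\Span{s}^{G})$, so it has order $p$ and populates the $(i-1)$-th layer. Iterating this step shows that once some layer $i\le p-1$ is populated by an element of order $p$, so are all layers $1,\dots,i$.

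When $k\le p-1$ this already finishes part~\eqref{item:p-1}: the given $t$ populates the $k$-th layer, and the descent yields $\Set{1,\dots,k}=\Set{1,\dots,\min\Set{k,p-1}}$. (Equivalently, $N=\Span{t}^{G}\subseteq Z_{k}(G)$ is then a regular $p$-group generated by elements of order $p$, hence of exponent $p$, so every commutator of $t$ has order dividing $p$.) The remaining, and genuinely harder, case is $k\ge p$, where the conclusion asks only for the layers $1,\dots,p-1$. By the descent it suffices to populate the $(p-1)$-th layer, i.e.\ to prove $\Omega_{1}(Z_{p-1}(G))\not\subseteq Z_{p-2}(G)$. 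Here the naive descent from $t$ fails --- as the examples in part~\eqref{item:viceversa} show, the commutators of $t$ need not have order $p$ in the intermediate layers $p,\dots,k-1$ --- so the order-$p$ element of the $(p-1)$-th layer cannot be produced simply by commutating $t$ down.

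To attack this crux I would combine two observations. First, the image of $N=\Span{t}^{G}$ in $G/Z_{k-p+1}(G)$ sits inside $Z_{k}(G)/Z_{k-p+1}(G)=Z_{p-1}(G/Z_{k-p+1}(G))$, so it has class $<p$ and is generated by the (order-$p$) images of the conjugates of $t$; being regular and generated by elements of order $p$ it has exponent $p$, which gives the quantitative control $n^{p}\in Z_{k-p+1}(G)$ for every $n\in N$. Second, the relevant commutators of $t$ do land in $Z_{p-1}(G)$, which is regular. The main obstacle is to upgrade the mod-$Z_{k-p+1}(G)$ exponent-$p$ information to a genuine element of order $p$ in $Z_{p-1}(G)\setminus Z_{p-2}(G)$; I expect to do this by a Hall--Petrescu collection carried out inside the regular group $Z_{p-1}(G)$, exploiting that $p\mid\binom{p}{j}$ for $1\le j\le p-1$ to cancel the $p$-power contributions and to correct a chosen layer-$(p-1)$ commutator by a deeper central element into an element of order $p$. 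Making this collection close up is the heart of part~\eqref{item:p-1}.

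For part~\eqref{item:viceversa} the plan is modular. Since $Z_{i}(G_{1}\times G_{2})=Z_{i}(G_{1})\times Z_{i}(G_{2})$ and $\Omega_{1}$ splits over direct products, the spectrum of a direct product is the union of the spectra; thus it is enough to build a few blocks. For~\eqref{item:viceversa-a} I would construct, for each $1\le k\le p-1$ and $c\ge k$, an explicit $p$-group of class $c$ (for instance a suitable (semi)direct or wreath product, or a quotient of a free nilpotent group) in which the first $k$ terms of the upper central series have exponent $p$ while every element of $Z_{i}(G)\setminus Z_{i-1}(G)$ with $i>k$ has order at least $p^{2}$, and verify the spectrum by computing $\Omega_{1}(Z_{i}(G))$ layer by layer (using regularity below level $p$). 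For~\eqref{item:viceversa-b} I would take the direct product of a class-$c$ block of spectrum $\Set{1,\dots,p-1}$ (the case $k=p-1$ of the previous construction) with, for each $c_{j}$, a block whose spectrum is $\Set{1,\dots,p-1,c_{j}}$ --- a single order-$p$ element inserted in the $c_{j}$-th layer on top of the forced bottom block. The main obstacle here is engineering the exponent jump in the upper central series at exactly the prescribed level, i.e.\ placing an element of order $p$ in one high layer while keeping all intermediate high layers free of order-$p$ elements; I expect this to require a careful choice of the acting cyclic group and of the orders of the module generators, followed by a direct computation of the upper central series and of the layer exponents of the construction.
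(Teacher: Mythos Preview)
Your descent step for part~\eqref{item:p-1} when the starting layer is at most $p-1$ is correct and is essentially the paper's mechanism: $\Span{s}^{G}\le Z_{i}(G)$ has class $<p$, hence is regular, so $[s,x]=s^{-1}s^{x}$ has order~$p$. The genuine gap is the case $k\ge p$. You correctly isolate the task (populate layer $p-1$) and record that $N=\Span{t}^{G}$ has exponent $p$ modulo $Z_{k-p+1}(G)$, but the proposed ``Hall--Petrescu collection inside $Z_{p-1}(G)$'' does not close. Knowing $n^{p}\in Z_{k-p+1}(G)$ for $n\in N$ says nothing about orders of elements in $Z_{p-1}(G)$ once $k-p+1\ge p-1$, and the Hall--Petrescu expansion has the term $c_{p}^{\binom{p}{p}}=c_{p}$ with no factor of $p$ to exploit; there is no evident correction that produces a genuine order-$p$ element in layer $p-1$.

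The paper avoids your case split with a single idea. Fix $i\le p-1$, let $m\ge i$ be minimal with an order-$p$ element $x$ in layer $m$, and suppose $m>i$. Rather than taking an arbitrary $y$, choose $y$ in the \emph{deepest} term $\gamma_{s}(G)$ of the lower central series subject only to $[x,y]\notin Z_{i-1}(G)$. Maximality of $s$ forces $[x,z]\in Z_{i-1}(G)\le Z_{p-2}(G)$ for every $z\in\gamma_{s+1}(G)$, in particular for $z=[x,y]$. Hence $\Span{x,[x,y]}$ is abelian modulo $Z_{p-2}(G)$, so it has class at most $p-1$ and is regular \emph{regardless of how large $m$ is}. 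Then $[x,y]=x^{-1}x^{y}$ has order $p$ and sits in some layer $t$ with $i\le t<m$, contradicting minimality. This ``depth trick'' manufactures a small regular subgroup containing both $x$ and $[x,y]$ without assuming $m\le p-1$; it is exactly what your proposal is missing.

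For part~\eqref{item:viceversa} your modular plan (spectrum of a direct product is the union of spectra) is the paper's strategy, but you leave the building blocks unspecified, and that is where the content lies. The paper uses two explicit families: split metacyclic groups $D_{c}$ with $\Omega_{1}(D_{c})=Z(D_{c})$ (spectrum $\Set{1}$, class $c$), and the maximal-class quotients $M_{c}$ of $\Span{\alpha}\ltimes\Z_{p}[\omega]$, whose spectrum is $\Set{1,\dots,\min\Set{c-1,p-1}}\cup\Set{c}$. Then $M_{k}\times D_{c}$ realises~\eqref{item:viceversa-a} and $M_{c_{1}}\times\cdots\times M_{c_{n}}\times D_{c}$ realises~\eqref{item:viceversa-b}. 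Your sketch does not supply a block with a single prescribed high value $c_{j}$ in its spectrum and nothing extra between $p-1$ and $c_{j}$; the maximal-class groups are precisely what makes this clean.
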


\begin{remark}\ppar
  \label{remark}
  \begin{enumerate}
  \item
    Part~\eqref{item:viceversa}  of  Theorem~\ref{thm:p-1} shows  that
    part~\eqref{item:p-1}  provides   the  only  restriction   on  the
    occurrence of  elements of order  $p$ in  the layers of  the upper
    central series of a finite $p$-group.
  \item 
    Not every central series will do in part~\eqref{item:p-1} of
    Theorem~\ref{thm:p-1}; this is discussed in
    Subsection~\ref{subsec:NECSWD}. 
  \end{enumerate}
\end{remark}

We are grateful to Cihan Okay and Igor Sikora for sharing on
Mathematics Stack Exchange the nice
Question~\ref{question}, which led to this note.

\section{Proof of Theorem~\ref{thm:p-1}}
\label{sec:proofs}

\subsection{Proof of part~\eqref{item:p-1}}
\label{subsec:proof}

For $i \leq \min(k, p-1)$ the set
\begin{equation*}
  \Set{
    j
    :
    j \ge i
    \text{, and there is $x \in Z_{j}(G) \setminus
      Z_{j-1}(G)$ of order $p$}
  } 
\end{equation*}
is non-empty, as by assumption it contains $k$. Let $m$ be its minimum.

Suppose by  way of contradiction  that $m >  i$.  Let $x  \in Z_{m}(G)
\setminus Z_{m-1}(G)$ have order $p$.  Since $x \in Z_{m}(G)$, we have
that for all  $y \in G$ the element $[x, y]$ lies  in $Z_{m-1}(G)$. Since $x
\notin Z_{m-1}(G)$, there are elements $y \in G$ such that $[x, y] \in
Z_{m-1}(G)        \setminus        Z_{m-2}(G)$        (see        also
Subsection~\ref{subsec:NECSWD}).   Since $Z_{m-2}(G)  \ge Z_{i-1}(G)$,
for such a $y$ we have  also $[x, y] \notin Z_{i-1}(G)$; in particular
$[x, y] \ne 1$.

Let $s$ be the  greatest integer for which there is  an element $y$ in
$\gamma_{s}(G)$, the $s$-th  term of the lower central  series of $G$,
such that $[x, y] \in  Z_{m-1}(G) \setminus Z_{i-1}(G)$. Since   for
such  a  $y$ the element $[x, y]$ lies in $\gamma_{s+1}(G)$,  we have
that $[x, [x,  y]]$ lies in 
$Z_{i-1}(G)$, and thus in $Z_{p-2}(G)$, as $i \le p - 1$.  Thus
\begin{equation*}
  \frac{\Span{x, [x, y]} Z_{p-2}(G)}{Z_{p-2}(G)}
\end{equation*}
is abelian,  so that the group  $H = \langle  x, [x, y] \rangle  \le H
Z_{p-2}(G)$ has  nilpotence class at  most $p-1$, and is  thus regular
(\cite[III.10.1]{Hup}). Clearly  $x^{y} = y^{-1} x  y = x [x,  y]$ is an
element of order $p$ in $H$, so that by regularity (\cite[III.10.5]{Hup})
\begin{equation*}
  [x, y]^{p}
  =
  (x^{-1} x^{y})^{p}
  =
  1.
\end{equation*}
Thus $[x, y] \in Z_{m-1}(G) \setminus Z_{i-1}(G)$ is an element of
order $p$ in $Z_{t}(G) \setminus 
Z_{t-1}(G)$, for 
some $m > t \ge i$, contradicting the definition of $m$. 

\begin{remark}
  In the proof above, an arbitrary central series could be used
  in the place of the lower central series.

\end{remark}

\subsection{Proof of part~\eqref{item:viceversa}}
\label{subsec:examples}

\newcommand{\thegroup}[0]{E}

We begin by recalling the construction of the unique infinite pro-$p$
group of maximal class, as lifted from~\cite[Section~5]{ranks}. For
the theory of $p$-groups of maximal class, see~\cite{Blackburn}, \cite[III.14]{Hup}, and \cite{L-GMK}.

Let $p$ be a prime, and $\Z_{p}$ be the ring of $p$-adic integers.
Let $\omega$ be a 
primitive $p$-th root of unity. $\omega$ has minimal polynomial
\begin{equation*}
  x^{p-1} + x^{p-2} + \dots + x + 1 \in \Z_{p}[x]
\end{equation*}
over $\Z_{p}$, so  that the ring $\Z_{p}[\omega]$, when  regarded as a
$\Z_{p}$-module, is free of rank $p - 1$.

The ring $\Z_{p}[\omega]$  is a discrete valuation  ring, with maximal
ideal $I = ( \omega - 1  )$. Consider the automorphism $\alpha$ of the
group  $\thegroup  = (\Z_{p}[\omega],  +)$  given  by multiplication  by
$\omega$. Clearly $\alpha$ has order $p$ in $\Aut(\thegroup)$.

The infinite pro-$p$-group of maximal class is the semidirect product
\begin{equation*}
  M
  =
  \Span{\alpha} \ltimes \thegroup.
\end{equation*}
For $p = 2$ this is the infinite pro-$2$-dihedral group, in which all
elements outside $\thegroup$ have order $2$. In general, the following
is well known. 
\begin{lemma}
  \label{fact}
  All elements of $M \setminus \thegroup$  have order $p$. In
  particular, $M$ is generated as a pro-$p$-group by elements of order $p$.
\end{lemma}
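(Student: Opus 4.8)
The plan is to compute the $p$-th power of an arbitrary element of $M \setminus \thegroup$ explicitly inside the semidirect product, and then to read off its vanishing from the cyclotomic identity satisfied by $\omega$.

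First I would write a typical element of $M \setminus \thegroup$ as $\alpha^{i} e$ with $1 \le i \le p-1$ and $e \in \thegroup$. Since conjugation by $\alpha$ acts on $\thegroup$ as multiplication by $\omega$, moving a factor $\alpha^{i}$ leftwards across an element of $\thegroup$ costs a scalar factor $\omega^{-i}$. Carrying this out inductively on $n$ yields the power formula
\begin{equation*}
  (\alpha^{i} e)^{n}
  =
  \alpha^{ni}
  \left( \sum_{k=0}^{n-1} \omega^{-ki} \right) e,
\end{equation*}
where the parenthesised scalar multiplies $e$ in the ring $\Z_{p}[\omega]$; the inductive step merely reindexes and merges the two geometric sums.

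Next I would specialise to $n = p$. The prefactor $\alpha^{pi}$ is the identity because $\alpha$ has order $p$, and the scalar $\sum_{k=0}^{p-1} \omega^{-ki}$ vanishes: as $p \nmid i$, the exponents $-ki$ run over all residues modulo $p$ while $k$ ranges over $\Set{0, \dots, p-1}$, so the sum collapses to $\sum_{j=0}^{p-1} \omega^{j}$, which is $0$ by the minimal polynomial $x^{p-1} + \dots + x + 1$ of $\omega$ recalled above. Hence $(\alpha^{i} e)^{p} = 1$; since $\alpha^{i} e$ has nontrivial component in $\Span{\alpha}$ it is not the identity, so its order is exactly $p$. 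The only genuine content is the power formula together with the recognition of its coefficient as this cyclotomic value; keeping the conjugation convention straight is the single place that needs care, and it does not affect the conclusion, since the twisted sum vanishes for either choice of sign.

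Finally, for the \emph{in particular} clause I would note that $\alpha$ itself has order $p$, and that for every $e \in \thegroup$ the element $\alpha e$ also has order $p$ by the computation just made, while $\alpha^{-1}(\alpha e) = e$. Thus each $e \in \thegroup$ lies in the closed subgroup generated by the order-$p$ elements $\alpha$ and $\alpha e$; these elements therefore topologically generate $\thegroup$, and together with $\alpha$ they generate $M$ as a pro-$p$ group.
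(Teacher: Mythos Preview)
Your proof is correct and follows essentially the same route as the paper's: both compute $(\alpha^{i} e)^{p}$ directly in the semidirect product and reduce to the vanishing of the geometric sum $\sum_{k=0}^{p-1}(\omega^{i})^{k}$. The only cosmetic differences are that the paper writes the formula in exponential notation $g^{\alpha^{i(p-1)}+\cdots+1}$ and justifies the vanishing by noting that $\omega^{i}$ is a Galois conjugate of $\omega$ (hence satisfies the same minimal polynomial), whereas you reindex the sum; also, you spell out the ``in particular'' clause explicitly, which the paper leaves implicit.
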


\begin{proof}
  If $g \in \thegroup$ and $0 < i < p$,  we
  have
  \begin{align*}
    (\alpha^{i} g)^{p}
    &=
    \alpha^{i p} g^{\alpha^{i(p-1)} + \alpha^{i(p-2)} + \dots + \alpha^{i} + 1}
    \\&=
    g^{\alpha^{i(p-1)} + \alpha^{i(p-2)} + \dots + \alpha^i + 1}
    \\&=
    (\omega^{i(p-1)} + \omega^{i(p-2)} + \dots + \omega^i + 1) g
    \\&=
    0,
  \end{align*}
  since $\omega^i$ is a conjugate of $\omega$, for $0 < i < p$, so that
  it has the same minimal polynomial. 
\end{proof}

Denote by $s_{1} \in E$ the multiplicative unit of $\Z_{p}[\omega]$, and let
$s_{n+1} = [s_{n}, \alpha] = (\omega - 1) s_{n}$, for $n \ge 1$. Then
we have that for $k \ge 2$ the $k$-th term $\gamma_{k}(M)$ of the
lower central series of $M$ is the closed subgroups spanned by $\Set{
  s_{i} : i \ge k }$. Moreover, since $(\omega - 1)^{p-1} = p \zeta$,
for some unit $\zeta$ in $\Z_{p}[\omega]$, we have, for $k \ge 1$,
\begin{equation}
  \label{eq:powers}
  s_{k + p- 1}
  =
  [s_{k}, \underbrace{\alpha, \dots, \alpha}_{p-1}]
  =
  p \zeta s_{k},
\end{equation}
where the repeated commutator is left normed.

For $c  \ge 2$ the  group $M_{c} = M  / \gamma_{c+1}(M)$ is  thus a finite
$p$-group of order $p^{c+1}$ and maximal class $c$,
with
\begin{equation*}
Z_{c-1}(M_{c}) = \Span{s_{2}, \dots , s_{c}} \gamma_{c+1}(M) / \gamma_{c+1}(M).
\end{equation*}
Lemma~\ref{fact} shows that  $M_{c} \setminus Z_{c-1}(M_{c})$ contains
elements    of   order    $p$,    and   it    is   immediately    seen
from~\eqref{eq:powers} that the set of  elements of order dividing $p$
in $Z_{c-1}(M_{c})$ is
\begin{equation*}
  Z_{t}(M_{c})
  =
  \Span{
    {s_{c - (t - 1)}},
    \dots ,
    {s_{c}}
  } \gamma_{c+1}(M)/
  \gamma_{c+1}(M),
\end{equation*}
where $t = \min\Set{c-1, p-1}$. 

Consider also the following
split metacyclic groups, for $c \ge 2$:
\begin{equation*}
  D_{c}
  =
  \begin{cases}
    \Span{ x, y : x^{p^{c}}, y^{p^{c}}, [x, y] = x^{p} },
    &
    \text{for $p > 2$;}
    \\[.75ex]
    \Span{ x, y : x^{2^{c}}, y^{2^{c-1}}, [x, y] = x^{2} },
    &
    \text{for $p = 2$.}
  \end{cases}
\end{equation*}
Thus $D_{c}$  is a group  of class  $c$, with $\Omega_{1}(G)  = Z(G)$
of order $p^{2}$,
where $\Omega_{1}(G) = \Span{
  x^{p^{c-1}},  y^{p^{c-1}}  }$ for $p$  odd,  and
$\Omega_{1}(G) = \Span{  x^{2^{c-1}}, y^{2^{c-2}} }$ for
$p = 2$.

In the following, we will make use of a couple of elementary facts.
\begin{lemma}
  \label{lemma:easy}
  Let $G_{1}$ and $G_{2}$ be non-trivial finite $p$-groups with
  nilpotence classes $c_{1}, c_{2}$ and spectra $S_{1},S_{2}$.

  Let $G_{1}\times G_{2}$ be their direct product.

  Then
  \begin{enumerate}
  \item\label{item:class}
    $G_{1}\times G_{2}$ has class $\max\Set{c_{1},c_{2}}$ and
  \item\label{item:spectrum}
    $G_{1}\times G_{2}$ has spectrum $S_{1}\cup S_{2}$.
  \end{enumerate}
\end{lemma}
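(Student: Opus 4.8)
The plan is to reduce everything to the single structural fact that the upper central series of a direct product is the termwise direct product, namely
\begin{equation*}
  Z_{i}(G_{1}\times G_{2}) = Z_{i}(G_{1})\times Z_{i}(G_{2})
\end{equation*}
for every $i \ge 0$. I would establish this by induction on $i$. The base case $i=1$ is the standard computation that a pair $(a,b)$ is central in $G_{1}\times G_{2}$ precisely when $a$ is central in $G_{1}$ and $b$ is central in $G_{2}$. For the inductive step I would use the canonical isomorphism
\begin{equation*}
  (G_{1}\times G_{2})/\bigl(Z_{i}(G_{1})\times Z_{i}(G_{2})\bigr)
  \cong
  \bigl(G_{1}/Z_{i}(G_{1})\bigr)\times\bigl(G_{2}/Z_{i}(G_{2})\bigr),
\end{equation*}
take centers on both sides (again by the base case), and pull back along the quotient map; this identifies $Z_{i+1}(G_{1}\times G_{2})$ with $Z_{i+1}(G_{1})\times Z_{i+1}(G_{2})$.

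Granting this, part~\eqref{item:class} is immediate: the class of $G_{1}\times G_{2}$ is the least $c$ with $Z_{c}(G_{1}\times G_{2}) = G_{1}\times G_{2}$, and by the displayed identity this holds exactly when $Z_{c}(G_{1})=G_{1}$ and $Z_{c}(G_{2})=G_{2}$, that is, when $c \ge c_{1}$ and $c \ge c_{2}$. The least such $c$ is $\max\Set{c_{1},c_{2}}$.

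For part~\eqref{item:spectrum} I would read off the layers from the same identity: a pair $(g_{1},g_{2})$ lies in the $i$-th layer of $G_{1}\times G_{2}$ exactly when $g_{1}\in Z_{i}(G_{1})$, $g_{2}\in Z_{i}(G_{2})$, and at least one of $g_{1}\notin Z_{i-1}(G_{1})$, $g_{2}\notin Z_{i-1}(G_{2})$ holds. To show $S_{1}\cup S_{2}$ is contained in the spectrum, I would take a witness $g_{1}$ of order $p$ in $Z_{i}(G_{1})\setminus Z_{i-1}(G_{1})$ for $i\in S_{1}$ and observe that $(g_{1},1)$ has order $p$ and lies in the $i$-th layer; the case $i\in S_{2}$ is symmetric. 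For the reverse inclusion, given an element $(g_{1},g_{2})$ of order $p$ in the $i$-th layer, the coordinate that fails to lie in $Z_{i-1}$---say $g_{1}$---satisfies $g_{1}\in Z_{i}(G_{1})\setminus Z_{i-1}(G_{1})$ and, being nontrivial with order dividing $p$, has order exactly $p$, so $i\in S_{1}$.

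There is no serious obstacle here: both assertions are formal consequences of the termwise description of the upper central series, and the only computations---that the order of $(g_{1},g_{2})$ is the least common multiple of the orders of $g_{1}$ and $g_{2}$, and that the center of a direct product splits---are entirely routine. The one point requiring a little care is ensuring in the reverse inclusion that the distinguished coordinate is genuinely of order $p$ rather than trivial, which follows because it lies outside $Z_{i-1}(G_{1})\supseteq Z_{0}(G_{1})=\Set{1}$.
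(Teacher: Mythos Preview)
Your argument is correct and follows essentially the same route as the paper: both reduce everything to the identity $Z_{i}(G_{1}\times G_{2})=Z_{i}(G_{1})\times Z_{i}(G_{2})$, derive part~\eqref{item:class} immediately, and handle both inclusions for part~\eqref{item:spectrum} via the witnesses $(g_{1},1)$ and the coordinate analysis of an order-$p$ element in the $i$-th layer. You give a bit more detail than the paper on the inductive proof of the central-series identity and on why the distinguished coordinate is nontrivial, but the structure is the same.
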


\begin{proof}
  For the terms of the upper central series we have
  \begin{equation}
    \label{eq:ucsofproduct}
    Z_{i}(G_{1} \times G_{2})
    =
    Z_{i}(G_{1}) \times Z_{i}(G_{2}),
  \end{equation}
  hence~\eqref{item:class}.

  As to~\eqref{item:spectrum},  if $g$ is  an element of order  $p$ in
  $Z_{i}(G_{1}) \setminus  Z_{i-1}(G_{1})$, say,  then $(g, 1)$  is an
  element  of  order  $p$  in  $Z_{i}(G_{1}  \times  G_{2})  \setminus
  Z_{i-1}(G_{1} \times G_{2})$, according to~\eqref{eq:ucsofproduct}.

  Conversely if $(g, h) \in Z_{i}(G_{1}
  \times G_{2}) = Z_{i}(G_{1}) \times Z_{i}(G_{2})$ has order $p$,
  then $g \in Z_{i}(G_{1})$, $h \in Z_{i}(G_{2})$, and  
  $g^{p} =  1 = h^{p}$. If $(g, h) \notin  Z_{i-1}(G_{1} \times G_{2}) =
  Z_{i-1}(G_{1}) \times Z_{i-1}(G_{2})$, then either $g \notin
  Z_{i-1}(G_{1})$, so that $g$ is an element of order $p$ in
  $Z_{i}(G_{1}) \setminus Z_{i-1}(G_{1})$, or $h \notin
  Z_{i-1}(G_{2})$, so that $h$ is an element of order $p$ in
  $Z_{i}(G_{2}) \setminus Z_{i-1}(G_{2})$.
\end{proof}
We need some further preliminary work in order to produce some
indecomposable examples later.  
Given two non-trivial finite $p$-groups $G_{1}, G_{2}$, 
take elements  $z_{1},z_{2}$ of order $p$  in $Z(G_{1}), Z(G_{2})$
and consider the group  $Q=G/\langle z_{1}z_{2}\rangle$. As the images
$\overline{G_{1}}, \overline{G_{2}}$ of  $G_{1}, G_{2}$  in $Q$  are
isomorphic to  $G_{1}, G_{2}$ it follows that  the class of  $Q$ is
the same as that of $G$.
\begin{lemma}
  Let $x_{i}\in G_{i}$. The following are equivalent
  \begin{enumerate}
  \item\label{lemmaitem:1}
    $\overline{x_{1} x_{2}}\in Z_{n}(Q)$, and
  \item\label{lemmaitem:2}
    $x_{1}\in Z_{n}(G_{1})$ and $x_{2}\in Z_{n}(G_{2})$.
  \end{enumerate}
\end{lemma}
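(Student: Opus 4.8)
The plan is to reduce the statement to the standard description of the upper central series by iterated commutators. Recall that for a group $H$ one has $g \in Z_n(H)$ if and only if every left-normed commutator $[g, h_1, \dots, h_n]$ with $h_1, \dots, h_n \in H$ equals $1$; this is a one-line induction on $n$, using $g \in Z_n(H) \iff [g,h] \in Z_{n-1}(H)$ for all $h$. Passing to the quotient $Q = G/N$ with $G = G_1 \times G_2$ and $N = \langle z_1 z_2\rangle$, the same reasoning gives $\overline{x_1 x_2} \in Z_n(Q)$ if and only if $[x_1 x_2, h_1, \dots, h_n] \in N$ for all $h_1, \dots, h_n \in G$. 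I would then record the two structural facts that drive the argument: first, commutators in the direct product are computed componentwise, so that writing the generic element of $G$ as a pair,
\begin{equation*}
  [(x_1,x_2), (h_1^{(1)},h_2^{(1)}), \dots, (h_1^{(n)},h_2^{(n)})]
  =
  \bigl([x_1, h_1^{(1)}, \dots, h_1^{(n)}],\; [x_2, h_2^{(1)}, \dots, h_2^{(n)}]\bigr);
\end{equation*}
and second, that $N$ meets each factor trivially, $N \cap G_1 = N \cap G_2 = 1$, which is exactly the fact, already noted, that $\overline{G_i} \cong G_i$ (since $z_1, z_2$ have order $p$).

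The implication \eqref{lemmaitem:2}$\Rightarrow$\eqref{lemmaitem:1} is immediate: if $x_1 \in Z_n(G_1)$ and $x_2 \in Z_n(G_2)$, then by the componentwise formula every $n$-fold commutator of $(x_1,x_2)$ in $G$ has both coordinates trivial, hence lies in $N$, so $\overline{x_1 x_2} \in Z_n(Q)$. (Equivalently, this is just that $Z_n(G) = Z_n(G_1)\times Z_n(G_2)$, from \eqref{eq:ucsofproduct}, maps into $Z_n(Q)$ under the quotient map.)

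The substantive direction \eqref{lemmaitem:1}$\Rightarrow$\eqref{lemmaitem:2}, which I regard as the main point, rests on a decoupling trick made possible by the two facts above. Assuming $\overline{x_1 x_2} \in Z_n(Q)$, the componentwise formula says that the pair displayed above lies in $N$ for every choice of entries. Fixing arbitrary $h_1^{(1)}, \dots, h_1^{(n)} \in G_1$ but choosing all $h_2^{(k)} = 1$, the second coordinate collapses to $[x_2,1,\dots,1] = 1$, so the pair lies in $N \cap G_1 = 1$; hence its first coordinate $[x_1, h_1^{(1)}, \dots, h_1^{(n)}]$ is trivial. As the $h_1^{(k)}$ were arbitrary, the commutator characterization yields $x_1 \in Z_n(G_1)$, and the symmetric choice (trivial $G_1$-components) gives $x_2 \in Z_n(G_2)$. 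I expect the only delicate point to be that this argument genuinely requires $n \ge 1$: there must be at least one commutator entry to exploit, and indeed for $n = 0$ one has $\overline{z_1 z_2} = 1 \in Z_0(Q)$ while $z_1 \notin Z_0(G_1) = 1$, so the equivalence is to be read for $n \ge 1$, which is all that the computation of spectra in the sequel requires. Beyond this, the proof is just the separation of the two coordinates, with no further obstacle.
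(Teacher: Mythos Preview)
Your proof is correct and follows essentially the same approach as the paper: both arguments use the iterated-commutator characterisation of $Z_n$, specialise the commuting elements to lie in a single factor $G_i$, and conclude via $N \cap G_i = 1$. Your componentwise notation and the remark on $n \ge 1$ make the mechanics a little more explicit, but the underlying idea is identical.
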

\begin{proof}
  It is immediate that~\eqref{lemmaitem:2} implies~\eqref{lemmaitem:1}

  Now assume~\eqref{lemmaitem:1} In particular 
  \begin{equation*}
    [x_{1} x_{2}, g_{1}, \cdots, g_{n}]
    =
    [x_{1}, g_{1},  \cdots, g_{n}]
    \in  \Span{ z_{1} z_{2} } \cap G_{1}
    =
    1
  \end{equation*}
  for all $g_{1},\ldots ,g_{n}\in  G_{1}$ and therefore
  $x_{1}\in   Z_{n}(G_{1})$.   Similarly   we   see   that   $x_{2}\in
  Z_{n}(G_{2})$.  \endproof
\end{proof}
Note  in particular  that $\overline{x_{1}x_{2}}\not\in  Z_{n}(Q)$ iff
either $x_{1}\not\in Z_{n}(G_{1})$  or $x_{2}\not\in Z_{n}(G_{2})$. It
follows  that $\overline{x_{1}x_{2}}\in  Z_{n}(Q)\setminus Z_{n-1}(Q)$
iff   either   $x_{1}\in  Z_{n}(G_{1})\setminus   Z_{n-1}(G_{1})$   or
$x_{2}\in    Z_{n}(G_{2})\setminus    Z_{n-1}(G_{2})$.   Now    assume
furthermore  that $z_{1}z_{2}$  is  not  a $p$th  power  in $G$.  Then
$\overline{(x_{1}x_{2})}^{p}=1$  if and  only if  $(x_{1}x_{2})^{p}\in
\langle z_{1}z_{2}\rangle$ iff  $(x_{1}x_{2})^{p}=1$ iff $x_{1}^{p}=1$
and $x_{2}^{p}=1$. From this we see that
\begin{prop}
  \label{prop:same}
  Suppose $z_{1}z_{2}$ is not a $p$th power. Then $Q$ and $G$ have the
  same nilpotence class and the same spectrum.
\end{prop}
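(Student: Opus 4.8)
The plan is to read off both conclusions directly from the two characterizations established in the paragraph preceding the statement, together with Lemma~\ref{lemma:easy}. For the nilpotence class there is essentially nothing new to do. As observed just above the statement, the images $\overline{G_1},\overline{G_2}\le Q$ are isomorphic to $G_1,G_2$, so the class of $Q$ is at least $\max\{c_1,c_2\}$; and as a quotient of $G=G_1\times G_2$ it has class at most that of $G$, which is $\max\{c_1,c_2\}$ by part~\eqref{item:class} of Lemma~\ref{lemma:easy}. Hence $Q$ and $G$ have the same class $\max\{c_1,c_2\}$.

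It remains to match the spectra. By part~\eqref{item:spectrum} of Lemma~\ref{lemma:easy} the spectrum of $G$ is $S_1\cup S_2$, so I would show that the spectrum of $Q$ is also $S_1\cup S_2$ by proving two inclusions. Every element of $Q$ has the form $\overline{x_1 x_2}$ with $x_i\in G_i$, and the discussion above records the two facts I intend to use repeatedly: first, $\overline{x_1 x_2}\in Z_n(Q)\setminus Z_{n-1}(Q)$ if and only if $x_1\in Z_n(G_1)\setminus Z_{n-1}(G_1)$ or $x_2\in Z_n(G_2)\setminus Z_{n-1}(G_2)$; second, under the standing hypothesis that $z_1 z_2$ is not a $p$th power, $\overline{(x_1 x_2)}^{p}=1$ if and only if $x_1^{p}=1$ and $x_2^{p}=1$.

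For one inclusion, suppose say $n\in S_1$ and pick $g\in Z_n(G_1)\setminus Z_{n-1}(G_1)$ of order $p$. Taking $x_1=g$ and $x_2=1$, the first fact places $\overline{g}$ in $Z_n(Q)\setminus Z_{n-1}(Q)$, and the second gives $\overline{g}^{\,p}=1$; since $n\ge 1$ the layer excludes the identity, so $\overline{g}$ has order exactly $p$ and $n$ lies in the spectrum of $Q$. The case $n\in S_2$ is symmetric. Conversely, if $n$ lies in the spectrum of $Q$, choose $\overline{x_1 x_2}\in Z_n(Q)\setminus Z_{n-1}(Q)$ of order $p$. The second fact yields $x_1^{p}=x_2^{p}=1$, and the first forces $x_1\in Z_n(G_1)\setminus Z_{n-1}(G_1)$ or $x_2\in Z_n(G_2)\setminus Z_{n-1}(G_2)$; the corresponding $x_i$ is then nontrivial of order dividing $p$, hence of order $p$, witnessing $n\in S_1$ or $n\in S_2$.

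The only genuine point, as opposed to bookkeeping, is the order equivalence in the second fact, and this is exactly where the hypothesis that $z_1 z_2$ is not a $p$th power is indispensable: it ensures that $\overline{(x_1 x_2)}^{p}=1$, that is $(x_1 x_2)^{p}\in\langle z_1 z_2\rangle$, can only occur with $(x_1 x_2)^{p}=1$, so that passing to $Q$ neither creates nor destroys elements of order $p$ in a prescribed layer. Once this is granted the two inclusions are immediate, and I expect no further obstacle.
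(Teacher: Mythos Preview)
Your proof is correct and follows exactly the route the paper takes: the paper derives the proposition directly from the two equivalences established in the paragraph preceding it (the layer characterization from the lemma and the order-$p$ equivalence using that $z_1z_2$ is not a $p$th power), together with the already-noted fact that $\overline{G_1},\overline{G_2}\cong G_1,G_2$ forces equality of classes. You have simply unpacked the ``From this we see that'' into the two explicit inclusions, which is fine.
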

We cannot drop the requirement that  $z_{1} z_{2}$ is not a $p$th power
as the following example shows.
\begin{example}
  Consider the group $H = D_{2} \times \Span{d}$ where $d$ is
  of order  $p^{2}$. Notice that the  spectrum of $H$ is  $\Set{1}$. Now
  let  $K = H/\Span{x^{p} d^{p}}  $. Then  $\overline{x d}$ is  of
  order $p$  in $K$ and $\overline{x d}\in  Z_{2}(K)\setminus Z(K)$ and
  thus the spectrum of $K$ is $\Set{1,2}$.
\end{example}

\subsection{Proof of part~\eqref{item:viceversa-a}}

For $k = 1$ an example is provided by $D_{c}$.

For $k \ge 2$ a
decomposable example is provided by $M_{k} \times D_{c}$, according to
Lemma~\ref{lemma:easy}. 

We now provide two different indecomposable examples.

\subsubsection{First example}
Consider first the
case when $c = k e$ is a multiple of $k$. Let 
\begin{equation*}
A = \Span{a_{1}} \times \dots \times \Span{a_{k}}
\end{equation*}
be a homocyclic group of exponent $p^{e}$, for some $e \ge 1$. The assignment
\begin{equation*}
a_{1} \mapsto a_{1} a_{2},
a_{2} \mapsto a_{2} a_{3},
\dots,
a_{k-1} \mapsto a_{k-1} a_{k},
a_{k} \mapsto a_{k} a_{1}^ {p}
\end{equation*}
defines  an  endomorphism  $\beta$  of   $A$,  which  is  actually  an
automorphism of  order a power of  $p$, as it is  such an automorphism
modulo  the Frattini  subgroup of  $A$. Let  $p^{t}$ be  the order  of
$\beta$.

Consider the  group $G$ which  is the semidirect  product of $A$  by a
cyclic group $\Span{b}$ of order $p^{t+1}$, with $b$ acting on $A$ via
$\beta$, so that $b^{p^{t}} \in Z(G)$. ($G$ coincides with $D_{e}$ for
$k = 1$.)

If $h a  \in G$ is an element  of order dividing $p$, for  some $h \in
\Span{b}$ and  $a \in A$,  then its  projection $h$ on  $\Span{b}$ has
also order dividing  $p$, and thus $h \in  \Span{b^{p^{t}}} \le Z(G)$,
so that $a \in \Omega_{1}(A)$. It  follows that the elements of $G$ of
order  dividing $p$  form the  set $\Span{\Omega_{1}(A),  b^{p^{t}}} =
\Span{a_{1}^{p^{e-1}},  \dots, a_{k}^{p^{e-1}},  b^{p^{t}}}$. Now  the
element $a_{k-i+1}^{p^{e-1}}$ of order $p$ lies in $Z_{i}(G) \setminus
Z_{i-1}(G)$, for $1 \le i \le k$.

For the general case when $c$ is not a  multiple of $k$, if $c = k e -
s$ for  some $e  \ge 1$ and  $1 \le s  < k$,  it suffices to  take the
subgroup of the $G$ we just constructed given by
\begin{equation*}
  \Span{a_{1}^{p}, \dots, a_{s}^{p}, a_{s+1}, \dots, a_{k}, b}.
\end{equation*}

\subsubsection{Second example}

Let $2\leq k\leq p-1$ and $c\geq k$. As an application of
Proposition~\ref{prop:same}, we produce an example of an
indecomposable finite $p$-group $G_{(k,c)}$ that has class $c$ and
spectrum $\Set{1,\cdots ,k}$.

Let $G_{1}  = D_{c}$  and $G_{2} = \langle s, t \rangle$ be  the largest
$2$-generator group of exponent $p$ and class $k$. Then $G=G_{1}\times
G_{2}$ has  the class and  spectrum we want. We  want a group  that is
furthermore indecomposable. Pick $d\in \gamma_{k}(G_{2})$ and consider
the group $Q=G/\Span{ x^{p^{c-1}} d }$.  By Proposition~\ref{prop:same} we know
that $Q$ has  the same class and  spectrum as $G$. It  remains to show
that $Q$ is indecomposable. We argue by contraction and suppose that
\begin{equation*}
  Q = A \times B,
\end{equation*}
where $A$  and $B$ are  non-trivial. This is a  group of rank  $4$ and
among the generators of $A$ and  $B$ there must be two generators, one
in 
\begin{equation*}
  \overline{x^{r} y^{s}G_{2}[G_{1},G_{1}]G_{1}^{p}}
\end{equation*}
and the other in
\begin{equation*}
  \overline{x^{u} y^{v}G_{2}[G_{1},G_{1}]G_{1}^{p}}
\end{equation*}
where $x^{r} y^{s}$
and $x^{u} y^{v}$ are linearly independent modulo the Frattini subgroup
$[G_{1},G_{1}]G_{1}^{p}$.               As               $[x, y]\not\in
[G_{1},G_{1}]^{p}\gamma_{3}(G_{1})$, these  two   generators  must
belong  to the  same component, say  $A$.
Replacing  these with  some
appropriate products of their powers
of  the two,  we can
assume that these are in $\overline{x G_{2}[G_{1},G_{1}]G_{1}^{p}}$ and
$\overline{y G_{2}[G_{1},G_{1}]G_{1}^{p}}$.   Again   as  $[x, y]\not\in
[G_{1},G_{1}]^{p}\gamma_{3}(G_{1})$ all the generators  in $B$ must be
in $\overline{G_{2}[G_{1},G_{1}]G_{1}^{p}}$.

A similar argument
shows  that  there  must  be  two   generator  for  $B$  that  are  in
$\overline{s [G,G]G^{p}}$  and   in  $\overline{t [G,G]G^{p}}$   and  as
$[s, t]\not\in [G_{2},G_{2}]G_{2}^{p}$ we furthermore  see that the two
generators  for $A$  are  in  $\overline{x [G,G]G^{p}}$ and  $\overline
{y [G,G]G^{p}}$.

Using the fact that $A$ and $B$ are normal in $Q$. We then see that 
\begin{equation*}
  A
  \geq
      [A, \underbrace{\overline{G_{1}}, \dots, \overline{G_{1}}}_{k-1}]
      =
      \gamma_{k}(\overline{G_{1}})
\end{equation*}
and
\begin{equation*}
  B
  \geq
      [B,\underbrace{\overline{G_{2}}, \dots, \overline{G_{2}}}_{c-1}]
      =
      \gamma_{c}(\overline{G_{2}}).
\end{equation*}
But then $A$ contains $\overline{x^{p^{c-1}}}$ and $B$ contains
$\overline{d^{-1}}$. As these elements are the same we get the
contradiction that $A\cap B \neq 1$.  

\subsection{Proof of part~\eqref{item:viceversa-b}} 

Note that the assumptions imply $c \ge p$. In the case when $n = 1$ and
$c_{1} = c = p$, an indecomposable example is given by $M_{p}$. So from
now on we may assume $c > p \ge 2$.

\subsubsection{A decomposable example}

By Lemma~\ref{lemma:easy}, such an example is given by
\begin{equation}
  \label{eq:dec}
  H
  =
  M_{c_{1}} \times M_{c_{2}} \times \dots \times M_{c_{n}} \times D_{c},
\end{equation}
where the factor $D_{c}$ is redundant when $c = c_{n}$.

This  example also shows that the
layers of the upper 
central series that contain elements of order $p$ may well contain
also elements of order greater than $p$.

\subsubsection{An indecomposable example}

We will construct an indecomposable example as a subgroup $G$ of the
decomposable one $H$ of~\eqref{eq:dec}.

Denote by $a_{i}, t_{i, 1}, \dots, t_{i, c_{i}}$ the images in $M_{c_{i}}$ of
$\alpha, s_{1}, \dots, s_{c_{i}}$.

For each $i$, consider the element $x_{i} = a_{i} t_{1, i}$ of order
$p$ of $M_{c_{i}}$, which together
with $a_{i}$ generates $M_{c_{i}}$. Consider for each $i$ the
non-abelian maximal subgroup 
\begin{equation*}
  X_{i}
  =
  \Span{
    x_{i}, \gamma_{2}(M_{c_{i}})
    }
\end{equation*}
of $M_{i}$, and the maximal subgroup
\begin{equation*}
  E
  =
  \Span{x, y^{p}}
\end{equation*}
of $D_{c}$. Write
\begin{equation*}
  \extra{X_{i}}
  =
  \Set{1} \times \dots \times
  \underbrace{X_{i}}_{i-\text{th place}}
  \times \dots \Set{1}
\end{equation*}
and
\begin{equation*}
  \extra{E}
  =
  \Set{1} \times \dots \times \Set{1} \times E.
\end{equation*}
For $z \in M_{c_{i}}$, write
\begin{equation*}
  \extra{z}
  =
  (1, \dots, 1,
  \underbrace{z}_{i-\text{th place}},
  1, \dots , 1) \in \extra{X_{i}},
\end{equation*}
and for $z \in E$ write
\begin{equation*}
  \extra{z}
  =
  (1, \dots, 1, z) \in \extra{E}.
\end{equation*}
Let also
\begin{equation*}
  a
  =
  (a_{1}, \dots, a_{n}, y)
  \in H.
\end{equation*}

Our example will be the subgroup of $H$ given by
\begin{equation*}
  G
  =
  \Span{
    a,
    \extra{X}_{1}, \dots, \extra{X}_{n}, \extra{E}
  }.
\end{equation*}

For $z \in X_{i}$  one has
\begin{equation*}
  [\extra{z}, a]
  =
  \extra{[z, a_{i}]} \in \extra{X_{i}},
\end{equation*}
and for $z \in E$ one has
\begin{equation*}
  [\extra{z}, a]
  =
  \extra{[z, y]} \in \extra{E}.
\end{equation*}
It follows that
\begin{equation*}
  \Span{a, \extra{X_{i}}}
  \cong
  M_{c_{i}},
  \quad
  \Span{a, \extra{E}}
  \cong
  D_{c}.
\end{equation*}
and for each $k \ge 2$ we have the equality
\begin{equation}
  \label{eq:gammas}
  \gamma_{k}(M_{c_{1}}) \times \dots \times \gamma_{k}(M_{c_{n}})
  \times
  \gamma_{k}(D_{c})
  =
  \gamma_{k}(G).
\end{equation}

The centre of $H$ is
\begin{equation*}
  Z(H)
  =
  \Span{
    \extra{t_{1, c_{1}}}, \dots, \extra{t_{n, c_{n}}},
    \extra{x^{p^{c-1}}}, \extra{y^{p^{c-1}}}. 
    }
\end{equation*}
for $p$ odd, and
\begin{equation*}
  Z(H)
  =
  \Span{
    \extra{t_{1, c_{1}}}, \dots, \extra{t_{n, c_{n}}},
    \extra{x^{2^{c-1}}}, \extra{y^{2^{c-2}}. }
    }
\end{equation*}
for $p = 2$.
We have $Z(H) \le G$, and
\begin{equation*}
  Z(H)
  =
  C_{H}(a) \cap C_{H}(\extra{x}),
\end{equation*}
so that  $Z(G) = Z(H)$.

Moreover  $Z(G)$ is contained in  the Frattini
subgroup $\Frat(G)$ of $G$, as
\begin{equation*}
  \extra{t_{i, c_{i}}} \in \extra{\gamma_{c_{i}}(M_{c_{i}})},
  \quad
  \extra{x^{p^{c-1}}} \in \Span{\extra{x^{p}}}
\end{equation*}
and
\begin{equation}
  \label{eq:itsapthpower}
  a^{p} = \extra{y^{p}}
\end{equation}
(recall $c \ge 3$).
Therefore $G$ has  no non-trivial
abelian direct factor.

Now $H / Z(H)$ and $G / Z(G)$ have the same structure as $H$ and $G$,
for parameters 
\begin{equation*}
  c_{1} - 1, \dots, c_{n} - 1, c - 1
\end{equation*}
(dropping $c_{1} - 1$ if it equals $p - 1$). Therefore $Z_{c}(G) = G$,
and for all $j < c$ the 
$j$-th of the 
upper central series of $G$ coincides with
\begin{equation*}
  (X_{1} \cap Z_{j}(M_{c_{1}}))
  \times \dots \times
  (X_{n} \cap Z_{j}(M_{c_{n}}))
  \times
  (E \cap Z_{j}(D_{c})),
\end{equation*}
so that the class and the spectrum of $G$ are as required.

Suppose now, by way of contradiction, that $G$ admits a non-trivial
decomposition $G = G_{1} \times 
G_{2}$. Since $U = \Span{ \extra{X_{1}}, \dots, \extra{X_{n}},
  \extra{E} }$ is a maximal 
subgroup of $G$, there will be an element of $G_{1}$, say, of the form
$a u$, for some $u \in U$. Consider a fixed set $L$ of minimal
generators of $G_{2}$; these will be part of a set of minimal
generators of $G$.

Assume there is an element $v \in L$ which is contained in $U$. Then
\begin{equation*}
  1
  =
  [a u, v]
  =
  [a, v]^{u} [u, v].
\end{equation*}
Since $[u, v] \in \gamma_{2}(U) \le \gamma_{3}(G)$, we obtain
$[a, v] \in \gamma_{3}(G)$. Write
\begin{equation*}
  v = (v_{1}, \dots, v_{n}, d),
\end{equation*}
so that
\begin{equation*}
  [a, v]
  =
  ([a_{1}, v_{1}], \dots, [a_{n}, v_{n}],
  [y, d]).
\end{equation*}
Since the groups $M_{c_{i}}$ are of maximal class, and $[a_{i}, v_{i}]
\in \gamma_{3}(M_{c_{i}})$, we obtain $v_{i}
\in \gamma_{2}(M_{c_{i}})$. Also, $[y, d] \in \gamma_{3}(D_{c}) = \Span{x^{p^{2}}}$
implies $d \in \Span{x^{p}, y^{p}}$.

From~\eqref{eq:gammas}~and \eqref{eq:itsapthpower} it  follows that $v
\in \Frat(G)$, contradicting the fact that  $v$ is in a minimal set of
generators of $G$.

Therefore all elements of $L$  can be assumed, taking suitable powers,
to be  of the form $a  w$, for some $w  \in U$.  Since the  product of
such an element by the inverse of  another of the same form is in $U$,
we obtain that $L$ has only one element, so that $G_{2}$
is a non-trivial
cyclic direct factor of $G$, a contradiction.

\subsection{Not every central series will do}
\label{subsec:NECSWD}

In a group $G = D_{c}$, for $p > 2$ and $c \ge 2$, we have
$\gamma_{2}(G) =  \Span{x^{p}}$, and $y^{p^{c-1}}$
is an element of order $p$ in the first layer
$G \setminus \gamma_{2}(G)$
of the lower central series.  But then the only
other layer of the lower 
central series of $G$
that contains elements of order $p$ is the last non-empty one
$\gamma_{c}(G) \setminus \Set{1} = \Span{x^{p^{c-1}}} \setminus \Set{1}$.

In fact,  in the proof  of Subsection~\ref{subsec:proof} we  have used
the  implication $\eqref{item:ucs} \implies \eqref{item:quote}$  from
the  following  characterisation  of the  upper 
central series, which is presumably well-known.
\begin{lemma}
  Let $G$ be a group, let $c \ge 1$, and let
  \begin{equation}
    \label{eq:a_series}
    \Set{1} = G_{0} < G_{1} < \dots < G_{c} = G
  \end{equation}
  be a central series.  The following are equivalent:
  \begin{enumerate}
  \item 
    \label{item:quote}
    for every $2 \le m \le c$ and every $x \in G_{m} \setminus
    G_{m-1}$, there is $y \in G$ such that $[x, y] \in G_{m-1}
    \setminus G_{m-2}$;
  \item
    \label{item:ucs}
    the series~\eqref{eq:a_series} coincides with the upper central
    series, that is, $G_{i} = Z_{i}(G)$ for all $1 \le i \le c$.  
  \end{enumerate}
\end{lemma}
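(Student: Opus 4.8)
The plan is to prove both implications by leaning on two standard facts about central series. First, for \emph{any} central series $\Set{G_i}$ one has $G_i \le Z_i(G)$ for all $i$; this is an immediate induction, since $G_0 = \Set{1} = Z_0(G)$ and $G_{i-1} \le Z_{i-1}(G)$ together with $[G_i, G] \le G_{i-1}$ forces $[G_i, G] \le Z_{i-1}(G)$, i.e. $G_i \le Z_i(G)$. Second, I will use the well-known description of the upper central series, namely $Z_i(G) = \Set{ g \in G : [g, G] \le Z_{i-1}(G) }$ for $i \ge 1$. With these in hand the easy direction is quick: assuming \eqref{item:ucs}, fix $m \ge 2$ and $x \in Z_m(G) \setminus Z_{m-1}(G)$. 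Since $x \notin Z_{m-1}(G) = \Set{g : [g, G] \le Z_{m-2}(G)}$, there is $y \in G$ with $[x, y] \notin Z_{m-2}(G)$, while $x \in Z_m(G)$ gives $[x, y] \in Z_{m-1}(G)$ automatically; hence $[x, y] \in Z_{m-1}(G) \setminus Z_{m-2}(G) = G_{m-1} \setminus G_{m-2}$, which is exactly \eqref{item:quote}.

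The substance lies in $\eqref{item:quote} \implies \eqref{item:ucs}$. The first move is to recast \eqref{item:quote} in contrapositive form. Because the series is central, $x \in G_m$ already forces $[x, G] \le G_{m-1}$, so asking for some $y$ with $[x,y] \in G_{m-1} \setminus G_{m-2}$ is the same as asking $[x, G] \not\le G_{m-2}$. Thus \eqref{item:quote} is equivalent to the assertion that, for every $m \ge 2$, any $x \in G_m$ with $[x, G] \le G_{m-2}$ must already lie in $G_{m-1}$.

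I would then establish, for every $i \ge 1$, the identity $G_i = \Set{x \in G : [x, G] \le G_{i-1}}$. The inclusion $\subseteq$ is centrality again. For $\supseteq$, take $x$ with $[x, G] \le G_{i-1}$ and let $m$ be minimal with $x \in G_m$, which exists since $G_c = G$. If $m > i$ then $m \ge i+1 \ge 2$ and $G_{m-2} \ge G_{i-1} \ge [x, G]$, so the contrapositive form of \eqref{item:quote} places $x \in G_{m-1}$, contradicting minimality; hence $m \le i$ and $x \in G_i$. A short induction now converts this identity into \eqref{item:ucs}: the base case reads $G_1 = \Set{x : [x, G] = 1} = Z(G) = Z_1(G)$, and if $G_{i-1} = Z_{i-1}(G)$ then $G_i = \Set{x : [x, G] \le G_{i-1}} = \Set{x : [x, G] \le Z_{i-1}(G)} = Z_i(G)$.

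The only delicate point is the minimal-index argument in the reverse implication, where one must keep the index bookkeeping straight so that \eqref{item:quote} is invoked only for $m \ge 2$; this is precisely where the standing hypothesis $i \ge 1$ (forcing $m \ge i+1 \ge 2$) enters, and it is the step most prone to an off-by-one slip. Everything else is routine once the contrapositive reformulation of \eqref{item:quote} is in place.
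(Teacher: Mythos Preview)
Your proof is correct. Both implications are handled cleanly, and the minimal-index argument for $\eqref{item:quote}\Rightarrow\eqref{item:ucs}$ is sound; the bookkeeping you flag as delicate is indeed the only place to be careful, and you have it right.

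The route differs mildly from the paper's. For $\eqref{item:quote}\Rightarrow\eqref{item:ucs}$ the paper inducts on the length $c$ of the series: it first shows $G_{1}=Z(G)$ by the same contrapositive trick you use (choosing $z\in Z(G)\setminus G_{1}$, locating it in some layer $G_{m}\setminus G_{m-1}$, and deriving a contradiction), and then passes to $G/G_{1}$ after checking that condition~\eqref{item:quote} is inherited by the quotient series. You instead stay inside $G$ and prove in one stroke the identity $G_{i}=\{x:[x,G]\le G_{i-1}\}$ for every $i$, then unwind it by induction on $i$. Your version is a little more economical, since it avoids the separate verification that~\eqref{item:quote} descends to quotients; the paper's version has the (minor) advantage of making the inductive structure on the class explicit. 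Either way the heart of the matter is the same contrapositive observation you isolate at the start.
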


\begin{proof}
  The upper central series $G_{i} = Z_{i}(G)$  clearly
  satisfies~\eqref{item:quote}. In fact, if $x \in Z_{m}(G)$, for some
  $m \ge 2$, is such that $[x, y] \in Z_{m-2}(G)$ for all $y \in G$,
  then $x \in Z_{m-1}(G)$, as $Z_{m-1}(G) / Z_{m-2}(G) = Z(G /  Z_{m-2}(G))$.

  Conversely,       assume        the       series~\eqref{eq:a_series}
  satisfies~\eqref{item:quote}, and  proceed by induction on  $c$. The
  statement  is vacuously  true for  $c  = 1$,  that is,  when $G$  is
  non-trivial and abelian. Let $c \ge 2$. We have $G_{1} \le Z(G)$. If
  by way of  contradiction $G_{1} < Z(G)$, pick $z  \in Z(G) \setminus
  G_{1}$. There will be an $m \ge  2$ such that $z \in G_{m} \setminus
  G_{m-1}$  but  $[z,  y]  =  1  \in  G_{m-2}$  for  all  $y  \in  G$,
  defeating~\eqref{item:quote}. Thus $G_{1} = Z(G) = Z_{1}(G)$.

  Now if $x G_{1} \in (G_{m} / G_{1}) \setminus (G_{m-1} / G_{1})$, for
  some $m \ge 3$, we have $x \in G_{m} \setminus
  G_{m-1}$. By~\eqref{item:quote} there is $y \in G$ for which $[x,
    y] \in G_{m-1} 
  \setminus G_{m-2}$, so that $[x G_{1}, y G_{1}] \in (G_{m-1} / G_{1})
  \setminus (G_{m-2} / G_{1})$.
  We can thus apply the inductive hypothesis to $G/G_{1}$ to
  obtain~\eqref{item:ucs}. 
\end{proof}

 
\providecommand{\bysame}{\leavevmode\hbox to3em{\hrulefill}\thinspace}
\providecommand{\MR}{\relax\ifhmode\unskip\space\fi MR }
\providecommand{\MRhref}[2]{%
  \href{http://www.ams.org/mathscinet-getitem?mr=#1}{#2}
}
\providecommand{\href}[2]{#2}

\end{document}